\documentclass[preprint,12pt]{elsarticle}

\usepackage{amsmath, amsthm, amsrefs}
\usepackage{mathtools}



\usepackage{graphicx}
\usepackage{amssymb}






\makeatletter   
\def\ps@pprintTitle{%
   \let\@oddhead\@empty
   \let\@evenhead\@empty
   \let\@oddfoot\@empty
   \let\@evenfoot\@oddfoot
}
\makeatother


\newcommand{\Pos}{\mathcal{P}}

\renewcommand{\ge}{\geqslant}
\renewcommand{\le}{\leqslant}


\theoremstyle{definition}
\newtheorem{definition}{Definition}
\newtheorem*{definition*}{Definition}

\newtheorem{remark}[definition]{Remark}
\newtheorem*{remark*}{Remark}

\theoremstyle{plain}
\newtheorem{conjecture}{Conjecture}
\newtheorem*{conjecture*}{Conjecture}

\newtheorem*{theorem*}{Theorem}

\newtheorem*{proposition*}{Proposition}

\newtheorem{lemma}[definition]{Lemma}
\newtheorem*{lemma*}{Lemma}

\newtheorem*{corollary*}{Corollary}


\begin{document}

\begin{frontmatter}


\title{Cyclotomic ordering conjecture}


\address[S.P. Glasby]{Center for the Mathematics of Symmetry and Computation,\\
  University of Western Australia, 35 Stirling Highway, Perth 6009, Australia}

\author{Stephen Glasby$^a$}

\begin{abstract}
  This note describes a conjecture I made (in Aachen, Sept. 2018) and some initial
  thoughts towards a solution. Given positive integers $m,n$, the conjecture
  is that either $\Phi_m(q)\le\Phi_n(q)$ or $\Phi_m(q)\ge\Phi_n(q)$ holds
  for all integers $q\ge2$. Pomerance  and Rubinstein-Salzedo proved the
  conjecture in~\cite{PR}.
\end{abstract}




\end{frontmatter}



We define a partial ordering $\preceq$ on the set $\Pos$ of positive integers.
Recall that $t^n-1=\prod_{d\mid n}\Phi_d(t)$ where the roots of the $d$th
cyclotomic polynomial $\Phi_d(t)$ are primitive roots of order $d$.
Hence $\deg(\Phi_d(t))=\phi(d)$. For $m,n\in\Pos$ 
write $m\preceq n$ if $\Phi_m(q)\le\Phi_n(q)$
for all integers $q\ge2$, and write $m\prec n$ if $m\preceq n$ and $m\ne n$.
(Clearly $a\preceq a$; $a\preceq b$ and $b\preceq a$ implies $a=b$; and
$a\preceq b$ and $b\preceq c$ implies $a\preceq c$.) Since
\[
  q-1<q+1<q^2-q+1\le q^2+1<q^2+q+1<q^4-q^3+q^2-q+1
\]
holds for all $q\ge2$, we have $1\prec2\prec6\prec 4\prec3\prec10$.
Similarly, one can show $10\prec12\prec8\prec 5\prec 14\prec 18\prec 9\prec 7\prec 15\prec 20\prec 24\prec 16\prec 30\prec 22\prec 11$.

\begin{conjecture}
  The set $\Pos$ of positive integers is totally ordered by $\prec$.
\end{conjecture}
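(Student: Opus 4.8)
The plan is to renormalize. Writing $\Phi_n(q)=\prod_{d\mid n}(q^{n/d}-1)^{\mu(d)}$, set
\[
  G_n(x):=\prod_{d\mid n}(1-x^{n/d})^{\mu(d)},\qquad\text{so that }\ \Phi_n(q)=q^{\phi(n)}G_n(1/q),
\]
with $G_n$ rational, $G_n(0)=1$, and $G_n(x)>0$ on $[0,1)$. For $q\ge2$ put $x=1/q\in(0,\tfrac12]$; then $\Phi_m(q)\le\Phi_n(q)$ is the same as $x^{\phi(m)}G_m(x)\le x^{\phi(n)}G_n(x)$, so the conjecture is equivalent to: for every pair $m,n$ the function $x\mapsto(\phi(m)-\phi(n))\log x+\log G_m(x)-\log G_n(x)$ has constant sign on $(0,\tfrac12]$. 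I would split on whether $\phi(m)=\phi(n)$.

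Suppose first $\phi(m)\ne\phi(n)$, say $\phi(m)<\phi(n)$; then I claim $m\prec n$. The engine is a pair of uniform estimates valid for all $n\ge1$ and all $q\ge2$ of the shape
\[
  q^{\phi(n)-1}(q-1)\ \le\ \Phi_n(q)\ \le\ \frac{q^{\phi(n)+1}}{q-1},
\]
equivalently $1-x\le G_n(x)\le\frac1{1-x}$, which I would prove by induction on $n$ from $(q-1)\Phi_n(q)=(q^n-1)/\!\prod_{d\mid n,\,d<n}\Phi_d(q)$. Granting them, the extra power of $q$ in $\Phi_n$ over $\Phi_m$ forces $\Phi_m(q)\le\Phi_n(q)$ outright when $q\ge3$, and also when $q=2$ provided $\phi(n)\ge\phi(m)+2$ (using $\Phi_m(2)<2^{\phi(m)+1}$ and $\Phi_n(2)\ge2^{\phi(n)-1}$). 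Since $\phi$ is even on $\{3,4,\dots\}$ while $\phi(1)=\phi(2)=1$, the only remaining possibility $\phi(n)=\phi(m)+1$ forces $m\in\{1,2\}$, $n\in\{3,4,6\}$ --- a short list verified by hand for every $q\ge2$. The genuinely delicate point is that at $q=2$ the crude bounds $(q\pm1)^{\phi(n)}$ are worthless, so one really must keep $\Phi_n(2)/2^{\phi(n)}=\prod_{d\mid n}(1-2^{-n/d})^{\mu(d)}$ inside $[\tfrac12,2)$; I would do this by isolating the factor indexed by the radical of $n$, the extreme case being $n$ prime, where $\Phi_n(2)/2^{\phi(n)}\to2$ from below.

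The substance is $\phi(m)=\phi(n)=:f$ with $m\ne n$ (and, trivially, $m,n\ge2$). Then $h:=\Phi_n-\Phi_m$ is a nonzero integer polynomial, and because $\Phi_m,\Phi_n$ are monic of degree $f$ and self-reciprocal, so is $h$, of some degree $d<f$; writing $h(t)=t^{\,f-d}\tilde h(t)$ with $\tilde h$ self-reciprocal of degree $2d-f$ and nonzero constant term, and using that the zeros of $\tilde h$ come in pairs $\{z,1/z\}$, it suffices to show $\tilde h$ has no zero in $(2,\infty)$ --- equivalently that $\log G_n(x)-\log G_m(x)$ keeps constant sign on $(0,\tfrac12]$. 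Since the coefficient of $t^{\phi(n)-1}$ in $\Phi_n$ is $-\mu(n)$, the sign for large $q$ is governed first by $\mu$ and then by lower coefficients, matching the orders $6,4,3$ and $10,12,8,5$ in the examples; the claim is that this lexicographic comparison of the coefficient vectors of $\Phi_m,\Phi_n$ already decides the sign of $h$ throughout $[2,\infty)$. The natural attack: expand $\log G_n(x)=\sum_{e\ge1}g_n(e)\log\frac{1-x^e}{1-x}$ with $g_n(e)=\mu(n/e)$ for $e\mid n$ and $0$ otherwise, and observe that $g_n-g_m$ has two vanishing moments, $\sum_e(g_n-g_m)(e)=0$ and $\sum_e e\,(g_n-g_m)(e)=\phi(n)-\phi(m)=0$, while $e\mapsto\log\frac{1-x^e}{1-x}$ is, for each fixed $x\in(0,1)$, strictly concave in $e$ and, once $x\le\tfrac12$, saturates to $\log\frac1{1-x}$ at geometric rate at most $x^e$. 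Pairing a two‑moment signed sequence against such a kernel ought to force a sign; but here is the true obstacle: pairs like $\{20,24\}$ show $g_n-g_m$ can have five sign changes, so a plain convex‑order argument fails and one must exploit the inequality $x\le\tfrac12$, i.e.\ the fact that cyclotomic values are most spread out exactly at $q=2$. Making the saturation estimate quantitative enough to swamp these finitely many sign changes, uniformly in $m$ and $n$, is the heart of the matter, and $q=2$ (perhaps also $q=3$), where one leans on $\Phi_n(2)\mid2^n-1$, is the tight case.

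Finally, once every pair of positive integers is shown to be $\preceq$-comparable, the conjecture is immediate: $\preceq$ is already reflexive, transitive, and antisymmetric (since $\Phi_a=\Phi_b$ as polynomials forces $a=b$), so $\prec$ is then a total order on $\Pos$. The hardest single ingredient throughout is uniform control of $\Phi_n(q)$, and of $\Phi_n(q)-\Phi_m(q)$, down to $q=2$, where the heuristic $\Phi_n(q)\approx q^{\phi(n)}$ is weakest.
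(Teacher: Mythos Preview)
First, note that the paper does \emph{not} prove this statement: it is recorded as a conjecture, with the full proof attributed to Pomerance and Rubinstein-Salzedo. The paper's own contribution is (i) the lemma that $\phi(m)<\phi(n)$ implies $m\prec n$, via Hering's bounds $cq^{\phi(n)}<\Phi_n(q)<c^{-1}q^{\phi(n)}$ with $c=1-q^{-1}$, which are exactly your bounds $1-x\le G_n(x)\le(1-x)^{-1}$ after the substitution $x=1/q$; and (ii) the lemma that $m$ and $2m$ are comparable for odd $m$, obtained by a refinement of the same bounds that places $\Phi_m(q)$ and $\Phi_{2m}(q)$ on opposite sides of $q^{\phi(m)}$ according to the sign of $\mu$ on the radical of $m$. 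Your treatment of the case $\phi(m)\ne\phi(n)$ thus coincides with the paper's first lemma.

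Your proposal, however, is not a proof, and you essentially say so yourself. In the decisive case $\phi(m)=\phi(n)$ you set up a kernel-pairing argument, using the two-moment cancellation $\sum_e(g_n-g_m)(e)=\sum_e e\,(g_n-g_m)(e)=0$ against a kernel concave in $e$, then observe that $g_n-g_m$ can have many sign changes (your example $\{20,24\}$), so that a straight convex-order comparison fails, and conclude that ``making the saturation estimate quantitative enough \dots\ is the heart of the matter.'' But you never carry this out: there is no estimate, no induction, no mechanism that actually forces a sign on $(0,\tfrac12]$. Everything preceding this---the renormalization $\Phi_n(q)=q^{\phi(n)}G_n(1/q)$, the self-reciprocal reduction, the sign-at-infinity discussion---is correct scaffolding, but the inequality you need in the equal-$\phi$ case is left as a hope rather than an argument. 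That is a genuine gap, located precisely where the whole difficulty of the conjecture lies; the paper does not close it either, handling only the special subcase $\{m,2m\}$.
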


Say that $m$ \emph{precedes} $n$ (or $n$ \emph{succeeds} $m$) if $m\prec n$ and there is no $x$
with $m\prec x \prec n$.

\begin{conjecture}
  $2\cdot3^i$ precedes $3^i$ for $i\ge2$. For $i=1$, we have $6\prec 4\prec 3$.
\end{conjecture}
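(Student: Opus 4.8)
The plan is to prove the two requirements of ``$2\cdot 3^i$ precedes $3^i$'' separately: that $2\cdot 3^i\prec 3^i$, and that no $x$ satisfies $2\cdot 3^i\prec x\prec 3^i$. The first is easy. Using $\Phi_{3^i}(t)=\Phi_3\bigl(t^{3^{i-1}}\bigr)$ and $\Phi_{2\cdot 3^i}(t)=\Phi_6\bigl(t^{3^{i-1}}\bigr)$ and putting $s=q^{3^{i-1}}$ (so $s\ge q\ge 2$), we get $\Phi_{2\cdot 3^i}(q)=s^2-s+1<s^2+s+1=\Phi_{3^i}(q)$ for all $q\ge 2$, hence $2\cdot 3^i\prec 3^i$. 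With $i=1$ this reads $q^2-q+1<q^2+1<q^2+q+1$, which gives $6\prec 4\prec 3$.

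Now suppose $2\cdot 3^i\prec x\prec 3^i$ for some $x$; I aim for a contradiction when $i\ge 2$. The first step fixes $\phi(x)$: since $\Phi_n$ is monic of degree $\phi(n)$, we have $\Phi_n(q)/q^{\phi(n)}\to 1$ as $q\to\infty$, so $m\preceq n$ forces $\phi(m)\le\phi(n)$ (otherwise $\Phi_m(q)>\Phi_n(q)$ for all large $q$). Applying this to $2\cdot 3^i\preceq x$ and to $x\preceq 3^i$ yields $\phi(x)=\phi(3^i)=2\cdot 3^{i-1}$.

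The core of the proof is then to classify the integers $x$ with $\phi(x)=2\cdot 3^{i-1}$. Writing $x=2^a3^bm$ with $\gcd(m,6)=1$, the condition that no prime exceeding $3$ divides $\phi(x)$ forces $a\le 2$ and $m$ squarefree with every prime divisor $p$ of $m$ satisfying $p\ge 5$ and $p-1\mid 2\cdot 3^{i-1}$; and since $4\nmid 2\cdot 3^{i-1}$, the number $m$ has at most one prime divisor. Comparing $2$-adic and $3$-adic valuations in $\phi(2^a)\phi(3^b)\phi(m)=2\cdot 3^{i-1}$ (and using that $3^{i-1}$ is odd for $i\ge 2$) then shows that the only solutions are $x\in\{3^i,\,2\cdot 3^i\}$, together with $x\in\{p,\,2p\}$ in the single case that $p:=2\cdot 3^{i-1}+1$ is prime. (For $i=1$ an extra solution $x=4$ appears, which is exactly why that case is exceptional.) If $p$ is composite we are already done. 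If $p$ is prime, I would eliminate the two stray candidates directly: $\Phi_p(q)=1+q+\cdots+q^{2\cdot 3^{i-1}}>1+q^{3^{i-1}}+q^{2\cdot 3^{i-1}}=\Phi_{3^i}(q)$ for all $q\ge 2$, so $p\not\preceq 3^i$; and since $\Phi_{2p}(q)=(q^p+1)/(q+1)$, one computes $\Phi_{2\cdot 3^i}(q)-\Phi_{2p}(q)=(s-1)(s-q)/(q+1)$ with $s=q^{3^{i-1}}$, which is positive for $q\ge 2$ and $i\ge 2$, so $2\cdot 3^i\not\preceq 2p$. In each case the candidate violates one of the two inequalities required of an element strictly between $2\cdot 3^i$ and $3^i$, giving the contradiction.

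The step I expect to be the main obstacle is the classification of the integers with $\phi$-value $2\cdot 3^{i-1}$: no individual deduction is difficult, but one must take care that the enumeration is genuinely exhaustive and that no solutions slip in through the $2$-part or $3$-part of $x$. The two concluding inequalities for $p$ and $2p$ are routine cyclotomic manipulations; the only mild nuisance is sign-bookkeeping in $\Phi_{2p}$, which the substitution $s=q^{3^{i-1}}$ keeps under control.
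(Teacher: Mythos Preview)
The paper does not prove this statement: it is stated as Conjecture~2 with no accompanying argument, so there is no ``paper's own proof'' to compare against. Your proposal thus goes beyond the paper by actually establishing the conjecture, and I have checked that it is correct.

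The reduction to $\phi(x)=2\cdot 3^{i-1}$ is sound (monicity of $\Phi_n$ indeed forces $\phi(m)\le\phi(n)$ whenever $m\preceq n$). Your enumeration of solutions to $\phi(x)=2\cdot 3^{i-1}$ is complete: writing $x=2^a3^bm$ with $\gcd(m,6)=1$, the $2$-adic constraint $v_2(\phi(x))=1$ rules out $a\ge3$, rules out $m$ having two odd prime factors, and (together with parity of $p-1$) forces $b=0$ as soon as $m>1$; the remaining bookkeeping then yields exactly $\{3^i,\,2\cdot3^i\}\cup\{p,\,2p\}$ with $p=2\cdot3^{i-1}+1$ (the last two only if $p$ is prime), and the extra solution $x=4$ when $i=1$. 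The two eliminations at the end are also right: $\Phi_p(q)>\Phi_{3^i}(q)$ is immediate since every monomial of the latter occurs in the former, and your identity
\[
\Phi_{2\cdot3^i}(q)-\Phi_{2p}(q)=\frac{(s-1)(s-q)}{q+1},\qquad s=q^{3^{i-1}},
\]
checks out (the numerator of $(q+1)(s^2-s+1)-(qs^2+1)$ factors as $(s-1)(s-q)$), and is positive for $i\ge2$ since then $s\ge q^3>q$. So $p$ violates $x\prec3^i$ and $2p$ violates $2\cdot3^i\prec x$, completing the argument.
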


Proving $\Phi_m(q)<\Phi_n(q)$ for all $q$
is the same as proving
$\Phi_n(q)-\Phi_m(q)>0$. After canceling any equal terms, this
inequality can be written $A(q)>B(q)$ where $A(t)$ and $B(t)$ are
integer polynomials whose nonzero coefficients are all positive. If the
largest nonzero coefficient is~$c$, then  $A(q)>B(q)$ holds for
all $q>c$ provided the leading monomial of $A$ is greater than
the corresponding monomial of $B$. (The base-$q$ expansion of $A(q)$ is
greater than $B(q)$.)
The conjecture asserts that the inequality also holds for $2\le q\le c$.

This reasoning will determine a putative
total ordering of $\Pos$ working for sufficiently large $q$ but maybe not
for small $q$. I wrote a program in Magma that proved that the
integers $\{1,2,\dots,2\cdot 10^4\}$ can be totally ordered. Since the
coefficients of $\Phi_n(t)$ are unbounded as $n\to\infty$, and their
maximum absolute value grows slowly, one might suspect that the conjecture is false and
the smallest incomparable
pair $(m,n)$ is large. What is positive evidence?

\begin{lemma}
  If $m,n\in\Pos$ and $\phi(m)<\phi(n)$, then $m\prec n$.
\end{lemma}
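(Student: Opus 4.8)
The plan is to sandwich $\Phi_n(q)$ between explicit rational functions of $q$ and then compare. First I would rewrite $\Phi_n(t)=\prod_{d\mid n}(t^d-1)^{\mu(n/d)}$ by extracting leading powers: since $\sum_{d\mid n}d\,\mu(n/d)=\phi(n)$, one gets $\Phi_n(q)=q^{\phi(n)}R_n(q)$ with $R_n(q)=\prod_{d\mid n}(1-q^{-d})^{\mu(n/d)}$. The heart of the argument is then the uniform two-sided estimate
\[
  \frac{q-1}{q}<R_n(q)<\frac{q}{q-1}\qquad\Longleftrightarrow\qquad (q-1)\,q^{\phi(n)-1}<\Phi_n(q)<\frac{q^{\phi(n)+1}}{q-1},
\]
valid for all $q\ge2$ and all $n\ge2$ (for $n=1$ the lower bound is the harmless equality $\Phi_1(q)=q-1$, and the upper bound still holds). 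Granting this, the lemma drops out when $\phi(n)-\phi(m)\ge2$:
\[
  \Phi_m(q)<\frac{q^{\phi(m)+1}}{q-1}\le\frac{q^{\phi(n)-1}}{q-1}\le(q-1)\,q^{\phi(n)-1}<\Phi_n(q),
\]
the two middle steps using $\phi(m)+1\le\phi(n)-1$ and $(q-1)^2\ge1$. Since every totient greater than $1$ is even, the only remaining possibility is $\phi(m)=1$, $\phi(n)=2$, that is $m\in\{1,2\}$ and $n\in\{3,4,6\}$; these finitely many pairs are already covered by the chain $1\prec2\prec6\prec4\prec3$ recorded above.

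To establish the estimate for $R_n(q)$ I would pass to logarithms. Writing $x=1/q\in(0,\tfrac12]$ and $g_n(y):=\sum_{d\mid n}\mu(n/d)y^d$, expanding $\log(1-x^d)=-\sum_{k\ge1}x^{dk}/k$ gives
\[
  \log R_n(q)=\sum_{d\mid n}\mu(n/d)\log(1-x^d)=-\sum_{k\ge1}\frac{g_n(x^k)}{k}.
\]
Comparing with $-\log(1-x)=\sum_{k\ge1}x^k/k$, it suffices to prove $|g_n(y)|<y$ for every $y\in(0,\tfrac12]$ and every $n\ge2$; this yields $|\log R_n(q)|<-\log(1-x)=\log\frac{q}{q-1}$, exactly the desired bound. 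I would then reduce to squarefree moduli: with $r=\mathrm{rad}(n)\ge2$ and $m_0=n/r$, a short check of the Möbius conditions shows $g_n(y)=G_r\!\left(y^{m_0}\right)$ where $G_r(z)=\sum_{e\mid r}\mu(r/e)z^e$; since $m_0\ge1$ forces $y^{m_0}\le y\le\tfrac12$, everything comes down to the claim
\[
  |G_r(z)|<z\qquad\text{for squarefree }r\ge2\text{ and }z\in(0,\tfrac12].
\]

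This last claim is the step I expect to be the main obstacle. For $r=p$ prime it is immediate, $G_p(z)=z^p-z$. For $\omega(r)\ge2$, let $p_1<p_2$ be the two least primes dividing $r$; isolating the $e=1$ and $e=p_1$ contributions in $G_r(z)/z=\sum_{e\mid r}\mu(r/e)z^{e-1}$ and using $\mu(r/p_1)=-\mu(r)$,
\[
  \frac{G_r(z)}{z}=\mu(r)\bigl(1-z^{\,p_1-1}\bigr)+R(z),\qquad |R(z)|\le\sum_{\substack{e\mid r\\ e\ge p_2}}z^{\,e-1}<\frac{z^{\,p_2-1}}{1-z}\le z^{\,p_1-1},
\]
the last inequality because $p_2-p_1\ge1$ and $\tfrac{z}{1-z}\le1$ for $z\le\tfrac12$; hence $|G_r(z)/z|\le(1-z^{p_1-1})+|R(z)|<1$. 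The one genuinely delicate point is the restriction $z\le\tfrac12$: the bound $|G_r(z)|<z$ can fail as $z\to1$, so it is essential that we only evaluate at $z=q^{-k}\le\tfrac12$ — and this is precisely where the hypothesis $q\ge2$ enters. Reassembling the pieces gives the displayed estimate for $\Phi_n(q)$, and with the two cases above, the lemma.
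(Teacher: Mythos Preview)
Your argument is correct and follows the same outline as the paper's: sandwich $\Phi_n(q)$ between $(1-q^{-1})\,q^{\phi(n)}$ and $(1-q^{-1})^{-1}q^{\phi(n)}$, use the parity of $\phi$ to force a gap $\phi(n)-\phi(m)\ge 2$, chain the inequalities, and dispose of the residual case $\phi(m)=1$ by hand. The only difference is that the paper quotes this sandwich inequality from \cite{cH}*{Theorem~3.6}, whereas you supply a self-contained derivation of it via the logarithmic expansion and the estimate $|G_r(z)|<z$ for squarefree $r\ge 2$ and $z\in(0,\tfrac12]$.
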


\begin{proof}
  It follows from~\cite{cH}*{Theorem~3.6} that
  $cq^{\phi(n)}<\Phi_n(q)<c^{-1}q^{\phi(n)}$ holds for all $q\ge2$ where
  $c=1-q^{-1}$. Clearly $\frac12\le c$ and $c^{-1}\le2$.
  For $n\ge3$ we know that $\phi(n)$ is even, so if $m,n\ge3$, then
  $\phi(m)\le\phi(n)-2$. Therefore
  \[
  \Phi_m(q)<c^{-1}q^{\phi(m)}\le c^{-1}q^{\phi(n)-2}\le cq^{\phi(n)}<\Phi_n(q).
  \]
  The cases when $m<3$ or $n<3$ are easily handled.
\end{proof}

Thus it suffices to consider whether distinct $m,n\in\Pos$
with $\phi(m)=\phi(n)$ are comparable, i.e. $m\prec n$ or $n\prec m$.
Clearly $\phi(m)=\phi(2m)$ if $m$ is odd.

\begin{lemma}
  If $m\in\Pos$ is odd, then $m\prec 2m$ or $2m\prec m$.
\end{lemma}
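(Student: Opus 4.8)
The plan is to settle $m=1$ and the odd prime powers at once---there the sign of $\Phi_{2m}(q)-\Phi_m(q)$ is manifestly constant---and to concentrate the real work on the case that $m$ has at least two distinct prime factors.

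For $m=1$ we have $\Phi_1(q)=q-1<q+1=\Phi_2(q)$ for all $q\ge2$, so $1\prec2$. Suppose now $m\ge3$ is odd. Then $\phi(m)$ is even, and since the primitive $2m$-th roots of unity are the negatives of the primitive $m$-th roots, $\Phi_{2m}(t)=\Phi_m(-t)$; writing $\Phi_m(t)=\sum_{j\ge0}a_jt^j$ this gives
\[
  \Phi_{2m}(q)-\Phi_m(q)=\Phi_m(-q)-\Phi_m(q)=-2q\,A(q^2),\qquad\text{where } A(s)=\sum_{i\ge0}a_{2i+1}s^i .
\]
Since $q^2\in\{4,9,16,\dots\}$ for integers $q\ge2$, it suffices to show that $A$ has no real root in $[4,\infty)$: then $A$ has one sign there, and $m\prec2m$ or $2m\prec m$ according to that sign. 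If $m=p^k$ is an odd prime power, then $\Phi_{p^k}(t)=1+t^{p^{k-1}}+\dots+t^{(p-1)p^{k-1}}$ has all coefficients in $\{0,1\}$, so $A$ has nonnegative (and not all zero) coefficients, whence $A(s)>0$ for $s>0$ and $2m\prec m$. This disposes of every $m$ with at most one prime factor.

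It remains to treat $m$ odd with at least two distinct prime factors, so $\mu(m)=\pm1$. Now $A$ is a palindromic integer polynomial of degree $r-1$, where $r=\tfrac12\phi(m)$, whose leading coefficient is the coefficient of $t^{\phi(m)-1}$ in $\Phi_m$, namely $-\mu(m)=\pm1$; so $A(s)$ has sign $-\mu(m)$ for large $s$, in agreement with the sign of $\Phi_{2m}(q)-\Phi_m(q)$ as $q\to\infty$, and the point is to extend this to all of $[4,\infty)$. The natural instrument is the identity $\Phi_m(t^2)=\Phi_m(t)\Phi_{2m}(t)$ (valid because $m$ is odd, the roots of $\Phi_m(t^2)$ being exactly the primitive $m$-th and $2m$-th roots of unity): since $\Phi_m(q)>0$ for $q\ge2$, it shows that $\Phi_{2m}(q)-\Phi_m(q)$ vanishes precisely where $\Phi_m(q^2)=\Phi_m(q)^2$. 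Writing $\Phi_m(t)=t^{r}\Psi_m(t+t^{-1})$ with $\Psi_m$ monic of degree $r$ and roots $\beta_j=2\cos(2\pi a_j/m)\in(-2,2)$ (so that $\Psi_{2m}$ has roots $-\beta_j$), this becomes: $\Psi_{2m}(x)-\Psi_m(x)=\prod_j(x+\beta_j)-\prod_j(x-\beta_j)$ should have no root with $x\ge\tfrac52$, equivalently $\Psi_m(x^2-2)\ne\Psi_m(x)^2$ there.

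This last step is the one I expect to be the obstacle, the trouble being uniformity in $m$. The elementary estimate $|A(s)|\ge s^{r-1}-H\cdot\frac{s^{r-1}-1}{s-1}$, with $H$ the height of $\Phi_m$, is positive once $s\ge H+1$, so the conclusion follows for all integers $q\ge\sqrt{H+1}$---in particular for every $q\ge2$ whenever $H\le3$, hence for all $m$ whose cyclotomic polynomial is short. But $H$ grows without bound, so for tall cyclotomic polynomials one is left with a range $2\le q<\sqrt{H+1}$ of small values, the case $q=2$ being the hardest, where a new argument is needed; there the roots $\beta_j$ can cluster arbitrarily close to $\pm2$, the factors $x\pm\beta_j$ at $x=\tfrac52$ span a wide range of magnitudes, and a term-by-term comparison of $x^2-2-\beta_j$ with $(x-\beta_j)^2$ fails because which of the two is larger depends on the sign of $\beta_j$. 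Some genuinely global input---of the kind presumably used by Pomerance and Rubinstein-Salzedo in \cite{PR}---seems to be required. In every example I have computed the real roots of $A$ in fact lie in $[-1,1]$ (equivalently, $\Phi_m(t)-\Phi_{2m}(t)$ has all its real roots in $[-2,2]$); a proof of that would finish the lemma cleanly.
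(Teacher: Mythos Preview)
Your proposal is not a complete proof: you explicitly leave the case of odd $m$ with at least two distinct prime factors unfinished, conceding that for small $q$ ``a new argument is needed'' and that ``some genuinely global input \dots\ seems to be required''. (There is also a slip: having at least two distinct prime factors does not force $\mu(m)=\pm1$; that needs $m$ squarefree. One can of course reduce to the squarefree radical via $\Phi_m(t)=\Phi_{m_0}(t^{m/m_0})$, but you do not say so, and your later claim that the leading coefficient of $A$ is $-\mu(m)=\pm1$ fails as written for non-squarefree $m$.)

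The missing ingredient is much lighter than the machinery you set up. The paper never analyses the odd part $A$ of $\Phi_m$ at all; it simply invokes the refined form of Hering's inequality \cite{cH}*{Theorem~3.6}: for every integer $q\ge2$ one has $\Phi_n(q)<q^{\phi(n)}$ when $\mu(n_0)=1$ and $\Phi_n(q)>q^{\phi(n)}$ when $\mu(n_0)=-1$, where $n_0$ is the radical of $n$. Since $m$ is odd, the radical of $2m$ is $2m_0$ and $\mu(2m_0)=-\mu(m_0)$, so the value $q^{\phi(m)}=q^{\phi(2m)}$ lies strictly between $\Phi_m(q)$ and $\Phi_{2m}(q)$ for every $q\ge2$. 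That single observation settles the lemma uniformly in $m$ and $q$; no height bound on $\Phi_m$, no separate treatment of $q=2$, and no appeal to \cite{PR} is needed.
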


\begin{proof}
  Let $m_0$ be the radical (square-free part) of $m$. If $\mu(m_0)=1$,
  i.e. $m_0$ is a product of an even number of primes,
  then~\cite{cH}*{Theorem~3.6} implies that
  \[
    c q^{\phi(m)}<\Phi_m(q)<q^{\phi(m)}<\Phi_{2m}(q)<c^{-1}q^{\phi(m)}
    \]
    where $c=1-q^{-1}$. Similar inequalities (with $m\leftrightarrow 2m$)
    hold if $\mu(m_0)=-1$, i.e. $m_0$ is a product of an odd number of primes.
\end{proof}

\begin{remark}
  The sequence $1,2,6,4,3,10,12,8,5,14,\dots$ is A206225 in the OEIS.
  It tacitly assumes (without proof) that $\prec$ is a total ordering.
\end{remark}

\begin{remark}
  If $m\ne n$ and $\phi(m)=\phi(n)$, then $\Phi_m(t)-\Phi_n(t)$ is a
  power of $t$ times a self-reciprocal polynomial. Hence $\Phi_m(t)-\Phi_n(t)>0$ for
  $t\ge2$ implies $\Phi_m(t)-\Phi_n(t)>0$ for $0<t\le\frac12$.
\end{remark}

\end{document}